\theoremstyle{plain}
\newtheorem{theorem}{Theorem}[section]
\newtheorem{lemma}[theorem]{Lemma}
\newtheorem{corollary}[theorem]{Corollary}
\theoremstyle{definition}
\newtheorem{remark}[theorem]{Remark}
\def\bes#1{\begin{equation*}\begin{split}#1\end{split}\end{equation*}}
\def\besn#1{\begin{equation}\begin{split}#1\end{split}\end{equation}}
\def\given{\typeout{Command 'given' should only be used within bracket command}}
\newcounter{@bracketlevel}
\def\@bracketfactory#1#2#3#4#5#6{
\expandafter\def\csname#1\endcsname##1{%
\addtocounter{@bracketlevel}{1}%
\global\expandafter\let\csname @middummy\alph{@bracketlevel}\endcsname\given%
\global\def\given{\mskip#5\csname#4\endcsname\vert\mskip#6}\csname#4l\endcsname#2##1\csname#4r\endcsname#3%
\global\expandafter\let\expandafter\given\csname @middummy\alph{@bracketlevel}\endcsname
\addtocounter{@bracketlevel}{-1}}%
}
\def\bracketfactory#1#2#3{%
\@bracketfactory{#1}{#2}{#3}{relax}{1mu plus 0.25mu minus 0.25mu}{0.6mu plus 0.15mu minus 0.15mu}
\@bracketfactory{b#1}{#2}{#3}{big}{1mu plus 0.25mu minus 0.25mu}{0.6mu plus 0.15mu minus 0.15mu}
\@bracketfactory{bb#1}{#2}{#3}{Big}{2.4mu plus 0.8mu minus 0.8mu}{1.8mu plus 0.6mu minus 0.6mu}
\@bracketfactory{bbb#1}{#2}{#3}{bigg}{3.2mu plus 1mu minus 1mu}{2.4mu plus 0.75mu minus 0.75mu}
\@bracketfactory{bbbb#1}{#2}{#3}{Bigg}{4mu plus 1mu minus 1mu}{3mu plus 0.75mu minus 0.75mu}
}
\newcounter{ctr}\loop\stepcounter{ctr}\edef\X{\@Alph\c@ctr}%
\edef\csname s\X\endcsname{\noexpand\mathscr{\X}}
\edef\csname c\X\endcsname{\noexpand\mathcal{\X}}
\edef\csname b\X\endcsname{\noexpand\boldsymbol{\X}}
\edef\csname I\X\endcsname{\noexpand\mathbbm{\X}}
\edef\csname r\X\endcsname{\noexpand\mathrm{\X}}
\def\now{%
\minute=\time%
\hour=\time \divide \hour by 60%
\hourMins=\hour \multiply\hourMins by 60%
\advance\minute by -\hourMins%
\zeroPadTwo{\the\hour}:\zeroPadTwo{\the\minute}%
}
\def\zeroPadTwo#1{\ifnum #1<10 0\fi#1}
\numberwithin{equation}{section}
\renewcommand\section{\@startsection {section}{1}{\z@}%
{-3.5ex \@plus -1ex \@minus -.2ex}%
{1.3ex \@plus.2ex}%
{\center\small\sc\mathversion{bold}\MakeUppercase}}
\def\subsection#1{\@startsection {subsection}{2}{0pt}%
{-3.5ex \@plus -1ex \@minus -.2ex}%
{1ex \@plus.2ex}%
{\bf\mathversion{bold}}{#1}}
\def\subsubsection#1{\@startsection{subsubsection}{3}{0pt}%
{\medskipamount}%
{-10pt}%
{\normalsize\itshape}{\kern-2.2ex. #1.}}
\def\blfootnote{\xdef\@thefnmark{}\@footnotetext}
\renewcommand{\cite}{\citet}
\def\^#1{\ifmmode {\mathaccent"705E #1} \else {\accent94 #1} \fi}
\def\~#1{\ifmmode {\mathaccent"707E #1} \else {\accent"7E #1} \fi}
\edef\-#1{\noexpand\ifmmode {\noexpand\bar{#1}} \noexpand\else \-#1\noexpand\fi}
\def\>#1{\vec{#1}}
\def\.#1{\dot{#1}}
\def\atop{\@@atop}
\renewcommand{\leq}{\leqslant}
\renewcommand{\geq}{\geqslant}
\renewcommand{\phi}{\varphi}
\newcommand{\eq}{\eqref}
\begin{document}

\title{\sc\bf\large\MakeUppercase{Increasing Gambler's Ruin duration and Brownian Motion exit times}}
\author{\sc Steven Evans, Erol A. Pek\"oz, and Rhonda Righter}
\date{\it University of California, Berkeley Department of Statistics, Boston University Questrom School of Business, and University of California, Berkeley Department of Industrial Engineering and Operations Research}

\maketitle

\begin{abstract}
In Gambler's Ruin when both players start with the same amount of money, we show the playing time stochastically increases when the games are made more fair.  We give two different arguments for this fact that extend results from \cite{Pek2021}. We then use this to show that the exit time from a symmetric interval for Brownian motion with drift  stochastically increases as the drift moves closer to zero; this result is not easily obtainable from available explicit formulas for the density. \end{abstract}
%
%%\noindent\textbf{Keywords: } 
%
\section{Introduction}
For a simple random walk $S_n = \sum_{i=1}^n X_i,$ where each $X_i$ is $+1$ with probability $p$ and $-1$ with probability $1-p$, we are interested in $T^{|k|},$ the time until the random walk hits either $k$ or $-k,$ where $k$ is a specified positive integer. It should be noted that $T^{|k|}$ is the duration of the Gambler's Ruin when both players start with $k.$  It was recently shown  in \cite{Pek2021} that $T^{|k|}$ for any $k$ is stochastically maximized when $p=1/2$, and previously in \cite{Zhang2021} where the weaker result in expectation was shown.  In this paper we prove the stronger result that $T^{|k|}$ stochastically increases as $p$ gets closer to 1/2 using two very different arguments.  We then use this to show that the exit time from a symmetric interval for Brownian motion with drift stochastically increases when the drift moves closer to zero, which appears also to be a new result and not easily obtainable from available formulas for the exit time probability density function.  
We also show that, for all $p$ and $k$, $T^{|k|}$ is independent of which player wins the game.
It should be noted that the technical challenges involved have left a scarcity of results for Gambler's Ruin with more than two players,  see \cite{Diaconis2021} and the references therein.

The Gambler's Ruin problem is one of the oldest problems in probability. 
As told by \cite{Song2013},  computing the chances each player wins was solved 
by Pascal and Fermat and appeared with four other problems in the earliest book   published on probability theory (\cite{Huygens1657}).
%; see \cite{Song2013}.  
Study of the distribution of the duration of the game started with \cite{deMoivre1711} and was taken up in modern times with
\cite{Feller1968}{[Chapter 14.5]}, who derived the formula
\bes{
\IP(T^{|k|}=n) = & k^{-1} 2^{n+1}[p^{(n+k)/2} (1-p)^{(n-k)/2}+p^{(n-k)/2} (1-p)^{(n+k)/2}]\\ & \times \sum_{j=1}^{k-1}
\cos^{n-1}\left(\frac{\pi j}{2k}\right)
\sin\left(\frac{\pi j}{2k}\right)
\sin\left(\frac{\pi j}{2}\right)
}
for point probabilities when $n-k$ is even, and
\cite{Karni1977}, who derived
\bes{\IP(& T^{|k|}=n)  \\ & =  \left[ {n-1\choose \frac{1}{2}(n-k)} - {n-1\choose \frac{1}{2}(n-3k) }  
- {n-1\choose \frac{1}{2}(n+k) }
+{n-1\choose \frac{1}{2}(n-5k) } 
+{n-1\choose \frac{1}{2}(n+3k) }
 \right] \\ &\ \ \ \ \times [p^{(n+k)/2} (1-p)^{(n-k)/2}+p^{(n-k)/2} (1-p)^{(n+k)/2}]
 }
 in the case where $n\geq 5k$ and gave adjustments for $k\leq n <5k.$
 % \cite{Katriel2014} derives generating functions for more general versions of the problem, but none of the formulas or approaches
 
Neither of the expressions above, however, seems  amenable to showing that the tail probabilities $\IP(T^{|k|}>n)$ increase (i.e., that $T^{|k|}$ stochastically increases) as $p$ moves towards $1/2$. The expressions can be easily seen to be increasing as $p$ moves towards 1/2 for sufficiently large $n$ (the derivative with respect to $p$ of either expression has the same sign as 
$n(1-2p)(p^k+(1-p)^k)+k(p^k-(1-p)^k)$), and hence the corresponding tail probabilities increase too.  The expressions for the point probabilities above can, however, decrease as $p$ moves towards 1/2 when $n$ is small, and thus not much can be easily said about tail probabilities in general.  
%
%This means the
%Neither of the formulas above, however, seem  amenable to showing that the tail probability $\IP(T^{|k|}>n)$ increases as $p$ moves towards $1/2$.  It can be seen that both increase in $p$ as it moves towards $1/2$ for sufficiently large $n$.
%Taking the derivative of the right hand side of either with respect to $p$, we see that it has the same sign as 
%$n(1-2p)(p^k+(1-p)^k)+k(p^k-(1-p)^k),$
%which is positive for sufficiently large $n$ when $p<1/2$, but it can be negative for small $n$ depending on $p$.  
This leaves stochastic inequalities difficult to obtain from these expressions.
Our main result, which we state next, bypasses the use of these expressions and shows that moving $p$ towards 1/2 stochastically increases the game duration. 
%Recall that for two random variables $X$ and $Y$, $X$ is stochastically larger than $Y$ (in the nonstrict sense), written $X \leq_{st}Y$, if $\IP(X>x) \leq \IP(Y>x)$ for all $x$.
\begin{theorem} \label{thm1}
$\IP(T^{|k|}>n)$ is increasing in $p$ for $0\leq p \leq 1/2$ and decreasing in $p$ for $1/2 \leq p \leq 1$.
%For any $0\leq p\leq p'\leq 1/2$ and any $n$ we have $\IP_p(T^{|k|}>n)\leq \IP_{p'}(T^{|k|}>n)$.
\end{theorem}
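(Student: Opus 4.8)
The plan is to bypass the explicit formulas entirely and argue structurally. First I would exploit the global sign-flip on trajectories. Writing $q=1-p$ and letting $\Omega_n$ be the set of nearest-neighbour paths $0=x_0,x_1,\dots,x_n$ with $\abs{x_i}<k$ for all $i$, one has $\IP(T^{|k|}>n)=\sum_{\gamma\in\Omega_n}p^{u(\gamma)}q^{\,n-u(\gamma)}$, where $u(\gamma)$ counts up-steps. The involution $\gamma\mapsto-\gamma$ maps $\Omega_n$ bijectively onto itself, since the interval $(-k,k)$ is symmetric and $x_0=0$, and it sends $u(\gamma)$ to $n-u(\gamma)$. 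Hence the generating polynomial is symmetric in $p,q$, and because $p+q=1$ it is a polynomial $F_n$ in the single variable $s:=pq=p(1-p)$. In particular $\IP(T^{|k|}>n)$ is invariant under $p\mapsto 1-p$, so the two monotonicity claims are equivalent and it suffices to treat $0\le p\le 1/2$. On this range $s$ is strictly increasing in $p$, so the theorem reduces to the single assertion that $F_n$ is nondecreasing on $s\in[0,1/4]$.

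I would emphasize what the crux is \emph{not}: one might hope that $F_n$ has nonnegative coefficients in $s$, which would finish immediately, but this already fails (for $k=4$ one computes $F_4(s)=4s-2s^2$), so genuine cancellation must be controlled and $F_n'(s)\ge 0$ on $[0,1/4]$ has to be proved directly. Equivalently, parametrising by $\delta:=1-2p=\sqrt{1-4s}\in[0,1]$, the task is to show that $g_n(0):=\IP_0(T^{|k|}>n)$ is nonincreasing in $\delta$.

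To do this I would run an induction on $n$ on the full survival profile $g_n(j):=\IP_j(T^{|k|}>n)$, $\abs{j}\le k$, governed by $g_{n+1}(j)=p\,g_n(j+1)+q\,g_n(j-1)$ with $g_n(\pm k)=0$. Introducing the symmetric and antisymmetric parts $S_n(j)=g_n(j)+g_n(-j)$ and $A_n(j)=g_n(j)-g_n(-j)$, so that $2g_n(0)=S_n(0)$, the recursion becomes the coupled system
\begin{align*}
S_{n+1}(j)&=\tfrac12\bigl(S_n(j+1)+S_n(j-1)\bigr)-\tfrac{\delta}{2}\bigl(A_n(j+1)-A_n(j-1)\bigr),\\
A_{n+1}(j)&=\tfrac12\bigl(A_n(j+1)+A_n(j-1)\bigr)-\tfrac{\delta}{2}\bigl(S_n(j+1)-S_n(j-1)\bigr),
\end{align*}
with $A_n(0)=0$, $S_n(k)=A_n(k)=0$, and the conventions $S_n(-j)=S_n(j)$, $A_n(-j)=-A_n(j)$. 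The target is $\partial_\delta S_n(0)\le 0$, and the plan is to propagate a package of sign and monotonicity invariants through this system: that $A_n(j)\ge 0$ for $j\ge 0$ (survival favours the side the drift pushes away from), that $S_n$ is suitably monotone and concave in $j$ so the source terms carry the right sign, and that these combine to force the required sign of the $\delta$-derivative.

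The main obstacle is precisely closing this induction: $S$ and $A$ are coupled through the drift source terms $\pm\tfrac{\delta}{2}A_n(j\pm 1)$ and $\pm\tfrac{\delta}{2}S_n(j\pm 1)$, so the invariants must be chosen strong enough to be self-propagating. The tempting shortcut—that survival from \emph{every} fixed site increases as $p\to 1/2$, i.e.\ $\partial_p g_n(j)\ge 0$ for all $j$—is false near the far boundary (one checks $\partial_p g_n(k-1)<0$), so the argument cannot be localised site by site and must work through the symmetric combination $S_n(0)$ throughout. As an independent check and a genuinely different second route, I would also pursue a purely probabilistic coupling of the two walks at bias levels $p_1<p_2\le 1/2$; there the difficulty is that the two-sided symmetric barrier defeats any monotone (pathwise-ordered) coupling, since raising $p$ hits $+k$ sooner but $-k$ later, so one is forced into a reflection-type coupling that keeps the fairer walk closer to the centre, and making that reflection respect the absorbing boundaries is the delicate point.
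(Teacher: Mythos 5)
Your opening reduction is correct and clean: the involution $\gamma\mapsto-\gamma$ on surviving paths does show $\IP(T^{|k|}>n)$ is a symmetric polynomial in $p$ and $q=1-p$, hence a polynomial in $s=pq$, so the two halves of the claim are equivalent and everything reduces to monotonicity in $s$ on $[0,1/4]$; your example $1-p^4-q^4=4s-2s^2$ correctly kills the hope of nonnegative coefficients, and your warning that sitewise monotonicity $\partial_p g_n(j)\ge 0$ fails near the far boundary is also correct. But the proof stops exactly where the theorem begins. The induction on $n$ through the coupled $(S_n,A_n)$ system is announced, not executed: you never state the ``package of sign and monotonicity invariants,'' never differentiate the recursion in $\delta$, and never verify that the invariants propagate. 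Since $\partial_\delta S_{n+1}(0)$ picks up contributions from $\partial_\delta S_n(\pm1)$ and from $A_n(\pm1)$ itself, and since you have already observed that the $\delta$-derivatives of $g_n(j)$ change sign in $j$, there is no reason presented why any closable invariant exists; indeed the whole difficulty of the theorem is concentrated in that unproved closure. The alternative coupling route is likewise left at the point of acknowledging that the two-sided barrier defeats a monotone coupling. As it stands this is a correct reformulation plus a research plan, not a proof.

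For comparison, the paper sidesteps the boundary-value induction entirely by decomposing $T^{|k|}$ into independent pieces whose laws \emph{can} be compared: in one proof, $T^{|k|}=Z+\sum_{i=1}^{N-1}Y_i$ via excursions away from $0$, where $N$ is geometric with parameter $\IP(T_0>T^{|k|})$ (monotone in $p$ by exactly your $(pq)^n$ path-counting observation) and the conditioned excursion lengths $Y_i,Z$ are compared by coupling two $h$-transformed walks whose one-step up-probabilities $u_i(p)$ satisfy a closed recursion $u_i=pq+u_iu_{i+1}$ that makes their monotonicity in $p$ an easy backward induction; in the other proof, $T^{|k|}$ is written as a sum of $N$ independent smaller games and one inducts on $k$, needing only that the number of subgames is hazard-rate monotone. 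If you want to salvage your approach, the missing content is precisely a provable, self-propagating set of inequalities for $(\partial_\delta S_n,\partial_\delta A_n,A_n)$; absent that, you should adopt one of the decomposition routes.
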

We prove this using two different approaches in the next sections.
\section{A decomposition argument}
Let 
$T_0$ be the first time (after time 0) the walk revisits state 0.
%, and let $$A_p=\{T_0<T^{|k|}\}$$ be the event the walk revisits 0, starting at 0, before the game ends. 
As was done in \cite{Pek2021}, we can use the decomposition
$$T^{|k|}=Z+\sum_{i=1}^{N-1} Y_i$$
%where $$Z\sim (T^{|k|}|T_0 > T^{|k|}),\qquad Y_i\sim (T_0|T_0<T^{|k|),\qquad N\sim \mbox{Geometric} ( \IP(T_0 > T^{|k|}))$$ are all independent variables.  
where $N\sim \mbox{Geometric} ( \IP(T_0 > T^{|k|}))$, $N-1$ is distributed as the number of times the walk returns to 0 before it first hits $\pm k$, $Z\sim (T^{|k|}|T_0 > T^{|k|})$ is the time to go from 0 to $\pm k$ given the walk does not return to 0, and $Y_i\sim (T_0|T_0<T^{|k|})$, $i=1,2,...$ are i.i.d. random variables representing the times to return to 0 given the walk returns to 0 before hitting $\pm k$. All of these
component random variables are independent.
We will show the following lemma, from which Theorem \ref{thm1} follows immediately.
\begin{lemma}\label{lem1}
$Y_i$, $Z$, and $N$ are all stochastically increasing (decreasing) in $p$ for $0\leq p \leq 1/2$ ($1/2 \leq p \leq 1$).
\end{lemma}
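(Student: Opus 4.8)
The plan is to reduce all three monotonicity statements to a single monotonicity claim in the variable $x := p(1-p)$. Writing $q = 1-p$, the key observation is that \emph{every} lattice path from $0$ back to $0$ of length $2m$ uses exactly $m$ up-steps and $m$ down-steps, so it has probability $p^m q^m = x^m$ regardless of its shape. Consequently each of the three component laws will turn out to depend on $p$ only through $x$ and to be stochastically increasing in $x$. Since $x = p(1-p)$ is increasing on $[0,1/2]$ and decreasing on $[1/2,1]$, both halves of Lemma \ref{lem1} follow simultaneously from the single claim ``stochastically increasing in $x$''; this also makes the $p \leftrightarrow 1-p$ symmetry of the problem transparent.

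For $Y_i \eqd (T_0 \mid T_0 < T^{|k|})$, a first return to $0$ at time $2m$ occurring before $\pm k$ is a single excursion of length $2m$ staying in $\{-(k-1),\dots,k-1\}$, so $\IP(T_0 = 2m,\, T_0 < T^{|k|}) = c_m x^m$, where $c_m$ counts such excursion paths and is \emph{independent of $p$}. Hence $\IP(Y_i = 2m) = c_m x^m / \sum_{\ell} c_\ell x^\ell$ is a power-series distribution in $x$: for $0 \le x_1 < x_2$ the likelihood ratio is proportional to $(x_2/x_1)^m$, increasing in $m$, and this monotone-likelihood-ratio property yields stochastic ordering, so $Y_i$ is stochastically increasing in $x$. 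For $Z \eqd (T^{|k|} \mid T_0 > T^{|k|})$ the same counting applied to first-passage paths from $0$ to $\pm k$ that avoid $0$ gives $\IP(T^{|k|} = k+2j,\, T_0 > T^{|k|}) = a_{k+2j}\,(p^k + q^k)\,x^{j}$, where $a_{k+2j}$ (independent of $p$) counts paths reaching $+k$ while staying positive, and under the reflection $S \mapsto -S$ the paths reaching $-k$ supply the companion weight that assembles into $p^k+q^k$. Crucially this factor does not depend on $j$, so it cancels in $\IP(Z = k + 2j) = a_{k+2j} x^{j} / \sum_{\ell} a_{k+2\ell} x^{\ell}$, again a power-series distribution; the same argument shows $Z$ is stochastically increasing in $x$.

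Finally, for $N \sim \mathrm{Geometric}(\rho)$ with $\rho = \IP(T_0 > T^{|k|})$, we have $\IP(N > n) = (1-\rho)^n$, so it suffices to show $1 - \rho$ is increasing in $x$. But $1 - \rho = \IP(T_0 < T^{|k|}) = \sum_{m \ge 1} c_m x^m$ with the \emph{same} nonnegative coefficients $c_m$ as above, hence manifestly increasing in $x \ge 0$; thus $N$ is stochastically increasing in $x$. Composing the three claims with $x = p(1-p)$ delivers the lemma.

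The genuinely delicate point is $N$. The explicit closed form $\rho = (p-q)(p^k+q^k)/(p^k - q^k)$ is a product of a factor decreasing in $p$ and a factor increasing in $p$ on $[0,1/2]$, so its monotonicity is not visible directly; the power-series representation $1-\rho = \sum_m c_m x^m$ is what makes it immediate. The remaining work is routine: verifying that the normalizing sums are finite (they equal $1-\rho$ and $\rho/(p^k+q^k)$, both at most $1$), that the support values $2m$ and $k+2j$ increase with the index so that likelihood-ratio monotonicity transfers to stochastic order on the actual times, and that the endpoints $x=0$ and $x=1/4$ behave correctly.
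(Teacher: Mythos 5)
Your proof is correct, but it takes a genuinely different route from the paper's. The paper handles $Y_i$ and $Z$ by a sample-path coupling of the two conditioned walks: it first shows (Lemma \ref{lem2}) that the one-step up-probabilities $u_i(p)$ of the walk conditioned to return to $0$ before $\pm k$ satisfy the recursion $u_i = p(1-p) + u_{i+1}u_i$, hence $u_i(p)=u_i(1-p)$ and $u_i$ is monotone in $p$ on each side of $1/2$, and then couples the two conditioned walks step by step so that the walk with $p$ closer to $1/2$ is always at least as far from the origin. You instead show that each component law is a power-series distribution in the single parameter $x=p(1-p)$: every path contributing to $\{Y=2m\}$ has exactly $m$ up- and $m$ down-steps, and in the $Z$ computation the reflection symmetry between first-passage paths to $+k$ and to $-k$ produces a $p$-dependent factor $p^k+(1-p)^k$ that is the same for every length and so cancels upon conditioning; monotone likelihood ratio in $x$ then gives stochastic monotonicity. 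Your treatment of $N$ coincides with the paper's, since both reduce to the observation that $\IP(T_0<T^{|k|})=\sum_m c_m x^m$ (this is exactly the paper's proof of Part 1 of Lemma \ref{lem2}). Comparing the two: your argument is shorter, bypasses the recursion for $u_i$ entirely, delivers the stronger likelihood-ratio ordering rather than mere stochastic ordering, and makes it transparent that all three component laws depend on $p$ only through $|p-1/2|$; the paper's coupling, on the other hand, is an explicit almost-sure construction that it reuses directly to assemble Theorem \ref{thm1}. Two small points you should make explicit in a final write-up: the reflection $S\mapsto -S$ is what guarantees the equality of the path counts to $+k$ and $-k$ (without it the coefficient of $x^j$ would depend on $p$ beyond $x$ and the power-series structure would fail), and the degenerate endpoints $p\in\{0,1\}$, where the conditioning event for $Y_i$ is null, should be excluded or treated by continuity.
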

 To prove this lemma, let $u_i(p)$ 
 be the chance the walk goes up next when it is at level $i$ given it returns to 0 before reaching either  $+k$ or $-k$, for $i = -k+1,-k+2,...,k-1$. Note that, by symmetry, $u_i(p)=1-u_{-i}(1-p)$ is the chance the walk goes down next from level $-i$ given it returns to 0 before reaching $\pm k$, where now the probability of the walk going up is $1-p$. We show in Lemma \ref{lem2} (2) that $u_i(p)=u_i(1-p)$, which then implies that the probability of moving away from 0 given the walk returns to 0 before reaching $\pm k$ depends only on the player imbalance, $|p-(1-p)|=2|p-1/2|$, not on the direction of imbalance. Moreover, from Lemma \ref{lem2} (3), this probability is decreasing in the player imbalance.

\begin{lemma}  \label{lem2}
The following hold for $0\leq p\leq 1$:
\begin{enumerate}
\item $\IP(T_0<T^{|k|})$ is strictly increasing (decreasing) in $p$ when $p<1/2$  (when $p>1/2$).
\item $u_i(p)=u_i(1-p)$ for $-k+1 \leq i \leq k-1$. 
\item When $1\leq i\leq k-2$, $u_{i}(p)$ is strictly increasing (decreasing) in $p$ when $p< 1/2$ (when $p>1/2$).
\end{enumerate} 
 \end{lemma}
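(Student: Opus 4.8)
The plan is to reduce everything to explicit gambler's ruin hitting probabilities and then to a single one-variable monotonicity problem. Writing $r=(1-p)/p$ (so that $p\,r=1-p$ and the map $p\mapsto 1-p$ is exactly $r\mapsto 1/r$), standard gambler's ruin on $\{0,1,\dots,k\}$ gives, for $1\le i\le k-1$, that the probability $\psi_i$ of reaching $0$ before $k$ from level $i$ equals $\psi_i=(r^{i}-r^{k})/(1-r^{k})$, with the mirror-image formula on the negative side. From the first-step decomposition at $0$ and the identity $p\,r=1-p$ I would obtain, and from the Doob $h$-transform of the walk conditioned to reach $0$ before $\pm k$ (whose up-transition at level $i$ is $p\,\psi_{i+1}/\psi_i$),
\[
\IP(T_0 < T^{|k|}) = 2(1-p)\,\frac{1 - r^{k-1}}{1 - r^{k}},
\qquad
u_i(p) = (1-p)\,\frac{1 - r^{\,k-i-1}}{1 - r^{\,k-i}}\ \ (1\le i\le k-1),
\]
together with $u_0(p)=\tfrac12$ (the two contributions $p\psi_1$ and $(1-p)\psi_{-1}$ are equal). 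These formulas make the $p\mapsto 1-p$ symmetry transparent: substituting $r\mapsto 1/r$ and simplifying shows $u_i(p)=u_i(1-p)$ for $1\le i\le k-1$, which is part (2); the case $i=0$ is immediate, and negative $i$ follows from the reflection identity $u_i(p)=1-u_{-i}(1-p)$ recorded above.

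For part (1) I would write the geometric factors as $1-r^{m}=(1-r)\sum_{j=0}^{m-1}r^{j}$, cancel the common factor $(1-r)$, and simplify to
\[
\IP(T_0 < T^{|k|}) = \frac{2\sum_{j=1}^{k-1} r^{j}}{\,1 + r^{k} + 2\sum_{j=1}^{k-1} r^{j}\,}.
\]
Since $t\mapsto \tfrac{2t}{1+2t}$ is increasing, the claim is equivalent to $\beta(r):=\bigl(\sum_{j=1}^{k-1}r^{j}\bigr)/(1+r^{k})$ having a strict global maximum at $r=1$. Differentiating, the sign of $\beta'(r)$ is the sign of the numerator, and the key computation is that it collapses to the manifestly sign-definite form
\[
N(r) = A'(r)\,(1+r^{k}) - k\,r^{k-1}A(r) = \sum_{m=0}^{k-2} (m+1)\bigl(r^{m} - r^{\,2k-2-m}\bigr),
\qquad A(r)=\sum_{j=1}^{k-1}r^{j}.
\]
Because $m<2k-2-m$ for $0\le m\le k-2$, every summand is negative for $r>1$, positive for $r<1$, and zero at $r=1$; hence $\beta$ is strictly increasing on $(0,1)$ and strictly decreasing on $(1,\infty)$. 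Translating back through $r=(1-p)/p$ (which is strictly decreasing in $p$, with $r>1\Leftrightarrow p<1/2$) yields exactly part (1).

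Finally, part (3) is a corollary of part (1): comparing the two displayed formulas gives $u_i(p)=\tfrac12\,\IP\!\bigl(T_0<T^{|k-i|}\bigr)$, i.e.\ $u_i$ is, up to the constant $\tfrac12$, the return probability for the symmetric-barrier problem with $k$ replaced by $k-i$. Since $1\le i\le k-2$ forces $k-i\ge 2$, part (1) applied with $k-i$ in place of $k$ delivers the stated strict monotonicity of $u_i$.

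The main obstacle is the monotonicity in part (1): the derivative of the rational expression is not obviously signed, and the crux is spotting the symmetric factorization of $N(r)$ above (equivalently, pairing the power $r^{m}$ with $r^{\,2k-2-m}$), which reduces the sign analysis to a termwise comparison rather than any delicate estimate. Everything else is either direct algebra from the gambler's ruin formulas (the explicit expressions and the $r\mapsto 1/r$ symmetry used in part (2)) or the reduction of part (3) to part (1).
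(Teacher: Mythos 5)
Your proof is correct, but it takes a genuinely different route from the paper's. For Part 1 the paper has a one-line combinatorial argument: every excursion path from $0$ back to $0$ of length $2n$ that avoids $\pm k$ has exactly $n$ up-steps and $n$ down-steps, hence probability $(p(1-p))^n$, which is already monotone in the required sense term by term; no formulas or derivatives are needed. Your closed-form computation with $r=(1-p)/p$ and the sign analysis of $N(r)=\sum_{m=0}^{k-2}(m+1)(r^m-r^{2k-2-m})$ reaches the same conclusion, but it turns the easiest part of the lemma into the computational heavy lifting. For Parts 2 and 3 the paper instead derives the recursion $u_i(p)=p(1-p)+u_{i+1}(p)u_i(p)$ from the Doob-transform identity $u_i(p)=p\,\IP_{i+1}(T^i<T^k)$ and runs a backward induction from the boundary value $u_{k-1}(p)=0$, obtaining the symmetry $u_i(p)=u_i(1-p)$ and the monotonicity of $u_i$ without ever solving for $u_i$ explicitly. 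Your approach buys explicit formulas $u_i(p)=(1-p)(1-r^{k-i-1})/(1-r^{k-i})$ and the rather pleasant identity $u_i(p)=\tfrac12\IP(T_0<T^{|k-i|})$, which unifies Parts 1 and 3 and makes the $r\mapsto 1/r$ symmetry of Part 2 transparent; the paper's approach buys brevity and avoids the gambler's-ruin closed forms entirely. One cosmetic point: your phrase ``equivalent to $\beta$ having a strict global maximum at $r=1$'' overstates the reduction (a global maximum alone would not give one-sided monotonicity), but since you actually prove the stronger statement that $\beta'$ has the sign of $1-r$, nothing is lost.
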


\begin{proof}[Proof of Lemma \ref{lem2}]  For Part 1, note that any path of length $2n$ that starts and ends at 0 without first visiting $k$ or $-k$ must contain $n$ upward steps and $n$ downward steps, and thus its probability equals $(p(1-p))^n$, which is strictly increasing in $p$ for $p< 1/2$ and strictly decreasing in $p$ for $p>1/2$.  The result for $\IP(T_0<T^{|k|})$ follows, since it is the sum of the probabilities of many such paths. This gives Part 1.

For Parts 2 and 3, we use the notation $\IP_i$ to denote the measure conditioned on starting the walk at level $i$.  From our observation about the symmetry of $u_i(p)$, pick $i>0$ without loss of generality, and let $T^j(p)$ be the time to hit level $j \geq 0$. We have

\begin{eqnarray*}
u_{i}(p)&=&\frac{p \IP_{i+1}(T_0<T^k)}{\IP_i(T_0<T^k)}=\frac{p \IP_{i+1}(T^i<T^k)\IP_i(T_0<T^k)}{\IP_i(T_0<T^k)}
\\
&=&p \IP_{i+1}(T^i<T^k),
\end{eqnarray*}
which we combine with
\[
\IP_{i+1}(T^i<T^k)=1-p+p\IP_{i+2}(T^{i+1}<T^k)\IP_{i+1}(T^{i}<T^k),
\]%
to get
\besn{\label{eq8}
u_i(p) = p(1-p)+u_{i+1}(p)u_{i}(p).
}
For Part 2 we use backward induction.  First note the boundary case $u_{k-1}(p)=u_{k-1}(1-p)=0$. We use the induction hypothesis  $u_{i+1}(p)=u_{i+1}(1-p)$ in the second equality below and (\ref{eq8}) in the first equality below to get
$$u_i(p)=\frac{p(1-p)}{1-u_{i+1}(p)} = \frac{p(1-p)}{1-u_{i+1}(1-p)} =u_i(1-p)$$ and thus Part 2 holds for all $i>0$ and then for all $i$ looking at the walk reflected across the origin.

For Part 3, we again use backward induction.  First note that $u_{k-1}(p)=0$ and \eq{eq8} with $i=k-2$ gives $u_{k-2}(p)= p(1-p)$, which is increasing in $p$ for $p< 1/2$ and decreasing for $p>1/2$.   Taking the derivative, from equation \eq{eq8} we get
$$u'_i(p) = 1-2p+u'_i(p)u_{i+1}(p) + u_i(p)u'_{i+1}(p) = \frac{1-2p+u_i(p)u'_{i+1}(p)}{1-u_{i+1}(p)}$$  which is positive when $p< 1/2$ and  negative when $p>1/2$ using the induction hypothesis that $u'_{i+1}(p)>0$ for $p< 1/2$ and $u'_{i+1}(p)<0$ for $p>1/2.$  Thus Part 3 holds.

\end{proof}

\begin{proof}[Proof of Lemma \ref{lem1}] 
%We first construct two coupled random walks, both conditional on a 
%revisit to the origin before hitting $k$ or $-k$, with $0\leq p \leq p'\leq 1/2$ where
%the first walk uses $p$ as its probability of going up and the second 
%uses $p'$.  We will let $Y$ and $Y(p')$  denote the  first time the 
%origin is revisited respectively for the two excursions.  
%
%Since by Part 2 of Lemma \ref{lem2} the distribution of the length of the conditioned walk will be the same if the first step is up or if it's down, we can assume both walks take their first step upwards.
Let $Y_{i}(p)$ be the time for the walk to go from level $i$
to 0 given it hits 0 before it hits $\pm k$, and for fixed $p$, $0<i<k$. Let 
$Z_{i}(p)$ be the time for the walk to go from level $i$ to $\pm k$ given it
hits $\pm k$ before it hits $0$, and for fixed $p$, $0<i<k$. Then, from the
structure of the random walk, and with $I(p)\sim $ Bernoulli$(p)$, 
\begin{eqnarray*}
Y(j) &=_{st}& I(p)Y_{1}(p)+(1-I(p))Y_{-1}(p) \\
Z(j) &=_{st}& I(p)Z_{1}(p)+(1-I(p))Z_{-1}(p) \\
Y_{i}(p) &=_{st}&Y_{-i}(1-p) \\
Z_{i}(p) &=_{st}&Y_{-(k-i)}(1-p).
\end{eqnarray*}%
The results for $Y(j)$ and $Z$ will then follow once we show that $%
Y_{i}(p)\leq _{st}Y_{i}(p^{\prime })$ for $0\leq p\leq p^{\prime }\leq 1/2$
and $0<i<k$, and that $Y_{1}(p)=_{st}Y_{-1}(p)$ and $Z_{1}(p)=_{st}Z_{-1}(p)$
for all $p$. That $Y_{1}(p)=_{st}Y_{-1}(p)$ and $Z_{1}(p)=_{st}Z_{-1}(p)$
follows from Part 2 of Lemma \ref{lem2} and the last two equations above. We will show that $Y_{i}(p)\leq _{st}Y_{i}(p^{\prime })$ by coupling the walks starting in state $i$ so that 
$Y_{i}(p)\leq Y_{i}(p^{\prime })$ with probability 1.
Whenever the walks are at 
different levels, we let them step independently; since the walks both started at the same level, they will remain an even number of steps apart and thus after this step can meet but not cross.  When the walks are at 
the same level, by Part 3 of Lemma \ref{lem2} the chance of a step up is at least as large for the second walk as it is for the first 
walk, so we can couple the steps so the second walk 
is always no 
closer to the origin than the first walk.  This means 
we will have $Y_i\leq Y_i(p')$ and thus $Y_i\leq_{st} Y_i(p').$
%A similar construction using Lemma \ref{lem2} gives $Z\leq_{st} Z(p')$ by noting that the chance the walk goes up next when it is at level $i$ given it reaches $k$  before the walk returns to zero equals  $u_{k-i}(1-p) \leq u_{k-i}(1-p')$.
%That $N\leq_{st} N(p')$ 
The result for $N$ follows from Part 1 of Lemma \ref{lem2}.
\end{proof}

\begin{proof}[Proof of Theorem \ref{thm1}]
We can similarly couple $T^{|k|}(p')$ together with $T^{|k|}(p)$ on the same probability space (making the dependency on $p$ explicit in the notation), using our decomposition and Lemma \ref{lem2}, so that $T^{|k|}(p')\geq T^{|k|}$ almost surely and the Theorem follows.
\end{proof}

\section{An inductive argument}

We will use induction on $k$ in our second proof of Theorem 1. Consider the
following, alternative, decomposition, in which the time to play a game when the boundary
is $\pm k$ (call this a size $k$ game) is a random sum of times to play
smaller games. In particular, we first play a size 1 game, at which time the
random walk is at $\pm 1$ with respective probabilities $p$ and $1-p$. Then
we play a size $k-1$ game, at the end of which the position of the random
walk will be in the set $\{-k,-(k-2),k-2,k\}$, depending on whether it was
at $+1$ or $-1$ at the end of the first game, and depending on whether it
went up or down by $k-1$ steps from there. If the position at the end of the
second game is $\pm (k-2)$, then we play a third game of size $2$, and so
on, until we finally reach $\pm k$ and stop. Let $N$ be the total number of such
games, and let $r(n)=\IP\{N=n|N\geq n\}$ be its hazard rate. Let $y(i)$ be
such that at the end of the $i$'th game, the possible positions of the
random walk are $\pm k$ and $\pm y(i)\neq \pm k$
%, so, e.g., $y(1)=1$ and $y(2)=k-2$, 
and let $d(i)$ be the size of the $i$'th game. That is, let 
%, so $d(1)=1$ and $d(2)=k-1$. More generally, with $y(0)=0$, 
$y(1)=d(1)=1$, and, for $i=2,3...$, if $%
y(i-1)>0$ let $d(i)=k-y(i-1)$ and $y(i)=|y(i-1)-d(i)|$, and if $y(i-1)=0$
let $d(i)=y(i)=1$. Note that for fixed $k$, $y(i)$ and $d(i)$ are
deterministic sequences. 
We therefore have, by construction and from the
Markov property of our random walk, the following.

\begin{lemma}
	\label{decomp}For $k>1$,%
	\[
	T^{|k|}=\sum_{i=1}^{N}T^{|d(i)|} 
	\]%
	where the $T^{|d(i)|}$'s are independent.
\end{lemma}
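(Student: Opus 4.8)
The plan is to realize the games as a deterministic family of stopping times of the single walk $(S_n)$ and then read the identity off from the strong Markov property. Put $\tau_0=0$ and, using the deterministic size sequence $d(i)$ of the construction, set $\tau_i=\inf\{n>\tau_{i-1}:|S_n-S_{\tau_{i-1}}|=d(i)\}$, the end of the $i$-th game, and let $N=\inf\{i\geq 1:|S_{\tau_i}|=k\}$ be the index of the first game that lands the walk on $\pm k$. The pathwise decomposition $T^{|k|}=\tau_N=\sum_{i=1}^N(\tau_i-\tau_{i-1})$ will hold once we know that the walk's first passage to $\{-k,+k\}$ is exactly $\tau_N$.

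First I would verify, by induction on $i$, the deterministic recursion underlying the construction: at time $\tau_i$ one has $|S_{\tau_i}|\in\{k,y(i)\}$. From a position of absolute value $y(i-1)<k$, a game of size $d(i)=k-y(i-1)$ ends at one of the points $S_{\tau_{i-1}}\pm d(i)$, and a one-line computation shows these have absolute values $k$ and $|2y(i-1)-k|=|y(i-1)-d(i)|=y(i)$. Crucially, this unordered pair of absolute values is identical whether the game started at $+y(i-1)$ or at $-y(i-1)$, which is precisely what makes $d(i)$ and $y(i)$ path-independent; the boundary case $y(i-1)=0$ is covered by the size-$1$ rule.

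Next I would observe that no game before the $N$-th prematurely reaches $\pm k$: the two terminal points of game $i$ are one of $\pm k$ and a point of absolute value $|2y(i-1)-k|<k$ (using $0<y(i-1)<k$), so the walk stays strictly inside $(-k,k)$ until a terminal step and meets $\pm k$ there only when $i=N$. This secures the pathwise identity above, the summands being the game durations. Finally, since each $d(i)$ is a \emph{deterministic} constant, the strong Markov property at $\tau_{i-1}$ together with the spatial homogeneity of the walk identifies the increment $\tau_i-\tau_{i-1}$, the first time the post-$\tau_{i-1}$ walk moves distance $d(i)$ from its starting point, as a copy of $T^{|d(i)|}$ that is independent of $\cF_{\tau_{i-1}}$; inducting on $i$ then yields that the game durations are mutually independent, each distributed as $T^{|d(i)|}$.

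The main obstacle is conceptual rather than computational: everything rests on $d(i)$ being path-independent, so the symmetric bookkeeping of the second step — the identical endpoint pair for the $+$ and $-$ branches — must be carried out with care. A secondary subtlety is the meaning of ``independent'' when the number of summands $N$ is itself random; I would make this precise by defining the whole infinite sequence of durations $\tau_i-\tau_{i-1}$ from the fixed sizes $d(i)$ (these never reference the walk, so the $\tau_i$ stay well defined past absorption), so that the durations are unconditionally independent and $N$ enters only as a truncation index. Should the downstream induction additionally require $N$ to be independent of the durations, that would follow from each game's duration being independent of which of its two boundaries is hit.
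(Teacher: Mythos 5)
Your proof is correct and takes the same route the paper does: the paper offers no written proof, asserting the lemma ``by construction and from the Markov property,'' and your argument is a careful, accurate fleshing-out of exactly that --- the deterministic recursion for $y(i)$ and $d(i)$, the check that no subgame reaches $\pm k$ prematurely, and the strong Markov property at the game-ending stopping times.
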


We note that $y(i)$, $d(i)$, $r(n)$, (and $N$) all depend on $k$, but we suppress this dependency in the notation.

Let $\pi _{k}^{+}(p)=\IP (T^k<T^{-k})$ be the probability that the walk $S=\{S_{n}\}_{n=0}^{%
	\infty }$, with $S_{0}=0$, ends at $+k$ before hitting $-k$, so $\pi
_{1}^{+}(p)=p$ and $\pi _{k}^{+}(p)=p^{k}/(p^{k}+(1-p)^{k})$, from the
classic Gambler's Ruin formula (\cite{Feller1968}). Let $\tau
(j)=\sum_{i=1}^{j}T^{|d(i)|}|N>j$ be the time to play $j$ subgames,
conditioned on $S$ having not hit $\pm k$ in the first $j$ games (so $%
S_{\tau (j)}=\pm y(j)$).

\begin{lemma}
	\label{indep}For any $j$, $\tau (j)$ is independent of the event $\{S_{\tau
		(j)}=+y(j)\}$, and 
	\[
	\IP(S_{\tau (j)}=+y(j))=\frac{p^{y(j)}}{p^{y(j)}+(1-p)^{y(j)}}=\pi
	_{y(j)}^{+}(p). 
	\]
\end{lemma}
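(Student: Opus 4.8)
The plan is to prove both parts of the lemma at once from a single reflection (sign-flip) argument applied to the sample paths of $S$. Write $\rho$ for the involution on trajectories that negates every step, sending $S=(S_0,S_1,\dots)$ to $-S=(-S_0,-S_1,\dots)$. Because the subgame sizes $d(i)$, the intermediate levels $\pm y(i)$, and the absorbing barriers $\pm k$ are all deterministic and symmetric about the origin, the step at which each subgame ends is the same for $S$ and for $-S$: the walk $-S$ hits a level $-\ell$ at exactly the step when $S$ hits $\ell$. Consequently $\rho$ preserves the event $\{N>j\}$, leaves the value of $\tau(j)$ unchanged, and interchanges the events $\{S_{\tau(j)}=+y(j)\}$ and $\{S_{\tau(j)}=-y(j)\}$.

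First I would record the likelihood ratio induced by $\rho$. A trajectory that completes the first $j$ subgames without absorption in $t$ steps, using $U$ up-steps and $D$ down-steps (so $U+D=t$ and $S_{\tau(j)}=U-D$), has probability $p^{U}(1-p)^{D}$. If it ends at $+y(j)$ then $U-D=y(j)$, while its reflection $\rho$ ends at $-y(j)$ and has probability $p^{D}(1-p)^{U}$. The ratio of the two is
\[
\frac{p^{U}(1-p)^{D}}{p^{D}(1-p)^{U}}=\left(\frac{p}{1-p}\right)^{U-D}=\left(\frac{p}{1-p}\right)^{y(j)},
\]
a constant that does not depend on the particular trajectory, and in particular does not depend on its length $t$.

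Next I would sum this relation over trajectories of a fixed length. Since $\rho$ preserves $\tau(j)$, it restricts to a bijection between the length-$t$ trajectories ending at $+y(j)$ and those ending at $-y(j)$, so for each $t$
\[
\IP\!\left(\tau(j)=t,\,S_{\tau(j)}=+y(j)\right)=\left(\tfrac{p}{1-p}\right)^{y(j)}\IP\!\left(\tau(j)=t,\,S_{\tau(j)}=-y(j)\right).
\]
On $\{N>j\}$ the events $\{S_{\tau(j)}=\pm y(j)\}$ partition the sample space, so dividing by the sum over both signs gives
\[
\IP\!\left(S_{\tau(j)}=+y(j)\mid \tau(j)=t\right)=\frac{(p/(1-p))^{y(j)}}{1+(p/(1-p))^{y(j)}}=\frac{p^{y(j)}}{p^{y(j)}+(1-p)^{y(j)}}=\pi_{y(j)}^{+}(p)
\]
for every $t$. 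Because this conditional probability is the constant $\pi_{y(j)}^{+}(p)$ regardless of $t$, the event $\{S_{\tau(j)}=+y(j)\}$ is independent of $\tau(j)$, which is Part 1; summing the displayed identity over $t$ (equivalently, reading off the $t$-free value) gives the probability formula of Part 2.

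The step I expect to require the most care is the verification that $\rho$ genuinely preserves both the non-absorption event $\{N>j\}$ and the stopping time $\tau(j)$, i.e.\ that sign-flipping a trajectory maps a valid play of the first $j$ subgames to another valid play of the same duration. This is where the symmetry of the entire construction about the origin (the deterministic symmetric sequences $y(i),d(i)$ together with the symmetric barriers $\pm k$) is essential; once that invariance is in hand, the likelihood-ratio computation and the conclusion are immediate. Everything is carried out under the conditional law $\IP(\cdot\mid N>j)$ implicit in the definition of $\tau(j)$, but since $\rho$ fixes $\{N>j\}$ this conditioning introduces no additional complication.
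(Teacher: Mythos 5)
Your proposal is correct and is essentially the paper's own argument: both rest on the reflection through the origin, which pairs each length-$t$ trajectory ending at $+y(j)$ with one ending at $-y(j)$ whose probability differs by the $t$-independent factor $(p/(1-p))^{y(j)}$, forcing the conditional probability given $\tau(j)=t$ to equal $\pi_{y(j)}^{+}(p)$ for every $t$. The only cosmetic difference is that you phrase it as a likelihood ratio under a global involution while the paper conditions on each reflected pair $A(\rho^{\pm})$ and then sums; your explicit remark that the reflection preserves the subgame structure (hence $\{N>j\}$ and $\tau(j)$) is a point the paper leaves implicit.
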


\begin{proof}
	Fix $j$, Condition on $N>j$ and $\tau (j)=t$, and for ease of notation, let $y=y(j)$.
	Let $\rho ^{+}$ be an arbitrary path of $S$ that ends up at $+y$ at time $t$
	without hitting $\pm k$ before time $t$. Then, on path $\rho ^{+}$, $S$ has
	a total of $(t-y)/2+y$ upward steps and $(t-y)/2$ downward steps, so the
	probability of path $\rho ^{+}$ is $p^{(t-y)/2}(1-p)^{(t-y)/2}p^{y}$. Let $%
	\rho ^{-}$ be the same path reflected through the origin (interchanging up
	and down steps), so the path $\rho ^{-}$ ends up at $-y$ at time $t$, also
	without hitting $\pm k$ before time $t$. The probability of path $\rho ^{-}$
	is $p^{(t-y)/2}(1-p)^{(t-y)/2}(1-p)^{y}$. Let $A(\rho ^{\pm })$ be the event
	that $S$ takes path $\rho ^{+}$ or $\rho ^{-}$ before hitting $\pm k$. Then 
	\begin{eqnarray*}
		\IP(S_{\tau (j)} &=&+y,A(\rho ^{\pm })|\tau (j)=t) \\
		&=&\IP(S_{\tau (j)}=+y|A(\rho ^{\pm }),\tau (j)=t)\IP(A(\rho ^{\pm })|\tau (j)=t)
		\\
		&=&\frac{p^{(t-y)/2}(1-p)^{(t-y)/2}p^{y}}{%
			p^{(t-y)/2}(1-p)^{(t-y)/2}p^{y}+p^{(t-y)/2}(1-p)^{(t-y)/2}(1-p)^{y}}\IP(A(\rho
		^{\pm })|\tau (j)=t) \\
		&=&\frac{p^{y}}{p^{y}+(1-p)^{y}}\IP(A(\rho ^{\pm })|\tau (j)=t).
	\end{eqnarray*}%
	Summing over all such pairs of reflected paths, $\rho ^{+}$ and $\rho ^{-}$,
	of length $t$, we have for each $t$, 
	\[
	\IP(S_{\tau (j)}=+y|\tau (j)=t)=\frac{p^{y}}{p^{y}+(1-p)^{y}}=\pi _{y}^{+}(p)%
	\text{.} 
	\]
	Thus, the event that $S_{\tau(j)}=+y$ is independent of $\tau(j)$, and $\IP(S_{\tau (j)}=+y)=\pi _{y}^{+}(p)$.
\end{proof}

A similar argument gives the following corollary.

\begin{corollary}
	\label{Tk indep}The random variable $T^{|k|}(p)$ is independent of whether the
	walk first hits $+k$ or $-k$ for all $p$ and for any $k$.
\end{corollary}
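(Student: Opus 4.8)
The plan is to reproduce the reflection argument from the proof of Lemma \ref{indep}, now applied directly to paths that first hit $\pm k$ rather than to the intermediate levels $\pm y(j)$. Since the event ``the walk first hits $+k$'' is binary, independence of $T^{|k|}(p)$ from it is equivalent to showing that $\IP(\text{first hit is }+k\mid T^{|k|}(p)=t)$ does not depend on $t$.

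First I would fix a time $t$ and condition on $T^{|k|}(p)=t$. Given any path $\rho^{+}$ of length $t$ whose first visit to $\{+k,-k\}$ occurs at time $t$ and lands at $+k$, I would form its reflection $\rho^{-}$ through the origin, interchanging up and down steps. Because reflection through the origin maps the strip $\{-k<S_{n}<k\}$ to itself, $\rho^{-}$ is again a first-passage path of length $t$, but one that lands at $-k$; thus reflection is a bijection between the length-$t$ first-passage paths ending at $+k$ and those ending at $-k$.

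Next I would count steps: a length-$t$ path ending at $+k$ uses $(t+k)/2$ up-steps and $(t-k)/2$ down-steps, so it has probability $p^{(t+k)/2}(1-p)^{(t-k)/2}$, while its reflection has probability $p^{(t-k)/2}(1-p)^{(t+k)/2}$. Hence, restricted to a single reflected pair $\{\rho^{+},\rho^{-}\}$, the conditional chance of hitting $+k$ first is
\[
\frac{p^{(t+k)/2}(1-p)^{(t-k)/2}}{p^{(t+k)/2}(1-p)^{(t-k)/2}+p^{(t-k)/2}(1-p)^{(t+k)/2}}=\frac{p^{k}}{p^{k}+(1-p)^{k}}=\pi_{k}^{+}(p),
\]
which is independent of $t$. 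Summing over all reflected pairs of length $t$ then gives $\IP(\text{first hit is }+k\mid T^{|k|}(p)=t)=\pi_{k}^{+}(p)$ for every $t$, which is exactly the asserted independence.

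I do not expect a serious obstacle, as this is a direct adaptation of Lemma \ref{indep}; the only points requiring care are verifying that reflection preserves the first-passage property (so that the pairing is genuinely a bijection on paths of the same length) and noting the parity constraint that $t-k$ be even, which holds automatically for any realizable value of $T^{|k|}(p)$.
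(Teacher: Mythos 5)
Your proof is correct and is exactly the argument the paper intends: the paper simply states that ``a similar argument'' to the proof of Lemma \ref{indep} gives this corollary, and your write-up is that reflection-pairing argument applied to first-passage paths ending at $\pm k$ instead of $\pm y(j)$, yielding $\IP(S_{T^{|k|}}=+k\mid T^{|k|}=t)=\pi_k^+(p)$ for every realizable $t$. Your two points of care (reflection preserves the first-passage property, and the parity of $t-k$) are handled correctly.
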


The following are also corollaries of Lemma \ref{indep}.

\begin{corollary}
	\label{pi increasing}For $0\leq p\leq 1/2$, $\pi _{k}^{+}(p)$ is increasing
	in $p$, from $\pi _{k}^{+}(0)=0$ to $\pi _{k}^{+}(1/2)=1/2$.
\end{corollary}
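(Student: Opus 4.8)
The plan is to argue directly from the closed form $\pi_k^+(p)=p^k/(p^k+(1-p)^k)$ recorded just before Lemma \ref{indep} (equivalently, from the identity established there), since both the endpoint values and the monotonicity are elementary consequences of this expression. First I would dispose of the boundary cases: at $p=0$ the numerator vanishes while the denominator equals $1$, so $\pi_k^+(0)=0$; and at $p=1/2$ both $p^k$ and $(1-p)^k$ equal $2^{-k}$, so $\pi_k^+(1/2)=2^{-k}/(2\cdot 2^{-k})=1/2$.

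For the monotonicity I would use either of two routes. The cleaner one is to rewrite, for $0<p\leq 1/2$, $\pi_k^+(p)=1/\bigl(1+((1-p)/p)^k\bigr)$. On this range the ratio $(1-p)/p$ is at least $1$ and is strictly decreasing in $p$, hence so is its $k$-th power, and therefore the reciprocal expression is strictly increasing; together with the value $\pi_k^+(0)=0$ this yields the claim on all of $[0,1/2]$. Alternatively, and perhaps more transparently, I would differentiate: writing $g(p)=p^k+(1-p)^k$ and applying the quotient rule, the two terms $k\,p^{2k-1}$ cancel and after factoring one is left with $\pi_k^{+\prime}(p)=k\,p^{k-1}(1-p)^{k-1}/g(p)^2$, which is strictly positive for $0<p<1$ and in particular on $(0,1/2)$. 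The stated endpoint values then complete the statement.

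I do not anticipate a genuine obstacle here: the content is purely the algebraic simplification of the derivative (checking the cancellation of the $p^{2k-1}$ terms) or the reciprocal rewriting, and the only point deserving a word of care is the behaviour at $p=0$, where one should conclude by continuity (or via the limit of the reciprocal form as $p\downarrow 0$) rather than by substituting into $((1-p)/p)^k$.
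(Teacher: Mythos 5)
Your proposal is correct and matches the paper's (implicit) argument: the paper states this corollary without a written proof, relying exactly on the closed form $\pi_k^+(p)=p^k/(p^k+(1-p)^k)$ quoted from Feller and re-derived in Lemma~\ref{indep}, and your derivative computation $\pi_k^{+\prime}(p)=k\,p^{k-1}(1-p)^{k-1}/\bigl(p^k+(1-p)^k\bigr)^2>0$ together with the endpoint evaluations is precisely the verification the authors leave to the reader. Your caution about handling $p=0$ by continuity rather than via the ratio $((1-p)/p)^k$ is well placed.
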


\begin{corollary}
	In the decomposition of Lemma \ref{decomp}, for fixed $k>1$, $N$ is independent
	of the $T^{|d(i)|}$'s, and, for $n=1,2,...$, 
	\[
	r(n)=\left\{ 
	\begin{array}{cc}
		\pi _{y(n-1)}^{+}(p)\pi _{d(n)}^{+}(p)+(1-\pi _{y(n-1)}^{+}(p))(1-\pi _{d(n)}^{+}(p)) & 
		\text{if }y(n-1)>0 \\ 
		0 & \text{if }y(n-1)=0%
	\end{array}%
	\right\} \text{.}
	\]
\end{corollary}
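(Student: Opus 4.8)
The plan is to deduce both assertions from the observation that each subgame in Lemma~\ref{decomp} contributes a \emph{direction} that is independent of its \emph{duration}, exactly as in the reflection computation proving Lemma~\ref{indep}. Since the sizes $d(i)$ are deterministic sequences, the strong Markov property makes the successive subgames mutually independent, with the $i$-th subgame a symmetric Gambler's ruin of half-width $d(i)$ started at $\pm y(i-1)$; its duration is distributed as $T^{|d(i)|}$ irrespective of the starting side. For each $i$ let $\varepsilon_i\in\{+1,-1\}$ record whether this subgame ends at its upper or lower barrier, so that the position of the walk after $j$ subgames equals $\sum_{i\leq j}\varepsilon_i d(i)$.

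For the independence of $N$ from the $T^{|d(i)|}$'s, I would run the reflection argument of Lemma~\ref{indep} one subgame at a time. Conditioning on the $i$-th subgame taking time $t$ and pairing each upward path with its reflection through the subgame's starting point, the two paths have probabilities in the ratio $(p/(1-p))^{d(i)}$, which is free of $t$; hence $\IP(\varepsilon_i=+1\mid T^{|d(i)|}=t)=\pi_{d(i)}^+(p)$ for every $t$, so $\varepsilon_i$ is independent of $T^{|d(i)|}$. Combined with the cross-subgame independence, this shows the full direction sequence $(\varepsilon_i)_i$ is independent of the full duration sequence $(T^{|d(i)|})_i$. Because the $d(i)$ are deterministic, $N=\min\{j:\sum_{i\leq j}\varepsilon_i d(i)=\pm k\}$ is a function of $(\varepsilon_i)_i$ alone, whence $N$ is independent of the $T^{|d(i)|}$'s. (To make $N$ and the $T^{|d(i)|}$'s jointly defined one extends the probability space by playing the predetermined subgames indefinitely, which alters neither the law of $N$ nor the decomposition of Lemma~\ref{decomp}.)

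For the hazard rate I would write $r(n)=\IP(N=n\mid N\geq n)$ and condition on the side occupied after $n-1$ subgames. When $y(n-1)>0$, Lemma~\ref{indep} gives $\IP(S_{\tau(n-1)}=+y(n-1))=\pi_{y(n-1)}^+(p)$, independent of $\tau(n-1)$. From $+y(n-1)$ the $n$-th subgame has size $d(n)=k-y(n-1)$ and barriers $+k$ and $2y(n-1)-k\in(-k,k)$, so by the classical ruin formula it stops (reaches $+k$) with probability $\pi_{d(n)}^+(p)$; by symmetry, from $-y(n-1)$ the barriers are $-k$ and $k-2y(n-1)\in(-k,k)$, and it stops (reaches $-k$) with probability $1-\pi_{d(n)}^+(p)$. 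Adding the two weighted contributions gives the stated expression. When $y(n-1)=0$ (in particular $n=1$, where $y(0)=0$) the $n$-th subgame has size $1$ and ends at $\pm1\neq\pm k$ since $k>1$, so $r(n)=0$.

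The main obstacle is the bookkeeping in the independence claim rather than any delicate estimate: one must exploit that the subgame sizes $d(i)$ are deterministic, so that all the randomness relevant to $N$ resides in the directions $(\varepsilon_i)_i$, and then verify the within-subgame independence of direction and duration by the reflection argument. Once this is in place, the hazard-rate identity is a single conditioning step using Lemma~\ref{indep} and the classical ruin formula.
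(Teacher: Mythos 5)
Your proposal is correct and follows essentially the same route as the paper: the independence of $N$ from the $T^{|d(i)|}$'s comes from the per-subgame independence of direction and duration (the paper cites Corollary~\ref{Tk indep}, which is exactly your reflection computation applied to each subgame), and the hazard-rate formula comes from conditioning on the side occupied at time $\tau(n-1)$ via Lemma~\ref{indep} together with the classical ruin probability $\pi_{d(n)}^{+}(p)$. Your version merely spells out more explicitly the bookkeeping the paper leaves implicit, namely that $N$ is a function of the direction sequence alone because the $d(i)$ are deterministic.
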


\begin{proof}
	That $N$ is independent of the $T^{|d(i)|}$'s follows from Corollary \ref{Tk indep}, because, for each subgame, the probability that it is the last game is independent of the time to play the game. That $r(n)=0$ if $y(n-1)=0$ is immediate because then $y(n)=d(n)=1$. From Lemma \ref{indep}, for $y(n-1)>0$ and given $N\geq n$,
	the probability that the walk is at $y(n-1)$ (respectively, $-y(n-1)$) at the end of the $%
	n-1$'st game, i.e., at time $\tau (n-1)$, is $\pi _{y(n-1)}^{+}(p)$ (respectively, $1-\pi
	_{y(n-1)}^{+}(p)$). The probability that the walk hits $\pm k$ at the end of
	the $n$'th game given it has not yet hit $\pm k$, is the probability of
	going up by $d(n)$ from $y(n-1)$ or going down by $d(n)$ from $-y(n-1)$,
	i.e., 
	$$
	r(n)=\IP\{N=n|N\geq n\}=\pi _{y(n-1)}^{+}(p)\pi _{d(n)}^{+}(p)+(1-\pi
	_{y(n-1)}^{+}(p))(1-\pi _{d(n)}^{+}(p)).
	$$
\end{proof}

\begin{proof}[Proof of Theorem \ref{thm1}]
	We use induction on $k$, where the result follows trivially for $%
	T^{|1|}\equiv 1$. Fix $k$ and assume the result holds for all $%
	j=1,...,k-1$, and therefore for each subgame $i$ because $d(i)<k$. That is, assume $T^{|d(i)|}$ is stochastically increasing in $p$ (for $0 \leq p \leq 1/2$). Then we will have the result for $k$, i.e., $T^{|k|}$ is stochastically increasing in $p$, from Lemma \ref{decomp} and
	the induction hypothesis, as long as $N$ is stochastically increasing in $p$.
	We will show the stronger result, that $N$ gets larger in the hazard rate
	sense as $p$ increases, i.e., its hazard rate $r(n)$ is decreasing
	(nonstrictly) in $p$ for all $n$. If $y(n-1)=0$ then $r(n)=0$ is trivially
	decreasing in $p$. Define, for fixed $n$ such that $y(n-1)\neq 0$, 
	\[
	g_{n}(p)=1-r(n)=\pi _{y(n-1)}^{+}(p)(1-\pi _{d(n)}^{+}(p))+(1-\pi
	_{y(n-1)}^{+}(p))\pi _{d(n)}^{+}(p).
	\]%
	Then, taking the derivative with respect to $p$ and using Corollary \ref{pi
		increasing} for $0 \leq p \leq 1/2$, 
	\begin{eqnarray*}
		g_{n}^{\prime }(p) &=&\pi _{y(n-1)}^{+\prime }(p)+\pi _{d(n)}^{+\prime
		}(p)-2\pi _{y(n-1)}^{+\prime }(p)\pi _{d(n)}^{+}(p)-2\pi _{y(n-1)}^{+}(p)\pi
		_{d(n)}^{+\prime }(p) \\
		&\geq &\pi _{y(n-1)}^{+\prime }(p)+\pi _{d(n)}^{+\prime }(p)-\pi
		_{y(n-1)}^{+\prime }(p)-\pi _{d(n)}^{+\prime }(p)=0\text{.}
	\end{eqnarray*}
\end{proof}

\begin{remark}
	We note that the decomposition can be simplified when $k$ is even by choosing $%
	y(1)=k/2$ instead of $y(1)=1$:%
	\[
	T^{|k|}(p)=\sum_{i=1}^{N}\left( T^{|k/2|}_i+ \tilde{T}^{|k/2|}_i\right) 
	\]%
	where $T^{|k/2|}_i =_{st} \tilde{T}^{|k/2|}_i =_{st} T^{|k/2|}$, and $N$, $T^{|k/2|}_i$, and 
	$\tilde{T}^{|k/2|}_i$ for all $i$ are independent, and 
	\[
	N\sim \text{ Geometric}((\pi _{k/2}^{+}(p))^{2}+(1-\pi _{k/2}^{+}(p))^{2}).
	\]
\end{remark}

\section{A corollary for Brownian motion with drift}

Letting $B(t)$ denote standard Brownian motion at time $t$, for $k>0$ consider the time 
$$T_{\mu}=\inf\{t\geq 0:\mu t+B(t) \notin [-k,k]\}$$ 
that Brownian motion with drift $\mu$ first exits the interval $[-k,k].$
\cite{Bor2002}[p. 309 equation (3.0.2), and p. 641] gives the probability density function for $T_{\mu}$ as
\besn{\label{eq4}\IP(T_{\mu} \in dt) = ( e^{-\mu k}  + e^{\mu k})  e^{-\mu^2t/2} \sum_{i=-\infty}^\infty \frac{k+4ik}{\sqrt{2\pi} t^{3/2}} e^{-(k+4ik)^2/(2t)}dt.}
Using weak convergence of the paths of asymmetric simple random walk to Brownian motion with drift, we obtain the following immediate corollary from Theorem \ref{thm1}.  
\begin{corollary}\label{c1}
With the definitions above, if $0\leq \mu \leq \mu^{\prime}$ then $T_{\mu} \geq_{st}  T_{\mu^{\prime}}$.
\end{corollary}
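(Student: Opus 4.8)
The plan is to realize the Brownian statement as a diffusive scaling limit of Theorem \ref{thm1}, using the convergence of asymmetric simple random walk to Brownian motion with drift that is already invoked in the surrounding text. Fix $k>0$ and $0\leq\mu\leq\mu'$. For a positive integer $m$ put $\delta=k/m$ and let $S^{(\delta)}$ be the simple random walk of Section 1 with up-probability $p_m=\tfrac12(1+\mu\delta)$; once $m$ is large enough that $\mu'\delta\leq1$ this lies in $[1/2,1]$. Let $X^{(\delta)}$ be the continuous piecewise-linear interpolation of the rescaled walk determined by $X^{(\delta)}(n\delta^2)=\delta S^{(\delta)}_n$. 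A one-line moment computation gives per-step mean $(2p_m-1)\delta=\mu\delta^2$ and per-step second moment $\delta^2$, hence variance $\delta^2(1-\mu^2\delta^2)$; these are exactly the increments required for Donsker's theorem with drift, so $X^{(\delta)}\convd\{\mu t+B(t)\}$ uniformly on compact time sets as $m\toinf$. Because $\delta m=k$, the interpolation first reaches level $\pm k$ exactly when $S^{(\delta)}$ first reaches $\pm m$, so $\delta^2T^{|m|}(p_m)=\inf\{t\geq0:|X^{(\delta)}(t)|\geq k\}$. The probabilities $p'_m=\tfrac12(1+\mu'\delta)$ give a second family with limiting drift $\mu'$ and first-passage time $\delta^2T^{|m|}(p'_m)$.

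With this in place the discrete inequality is immediate. For $m$ large we have $1/2\leq p_m\leq p'_m\leq1$, so the $p\geq1/2$ half of Theorem \ref{thm1} yields $T^{|m|}(p_m)\geq_{st}T^{|m|}(p'_m)$, and multiplying by the positive constant $\delta^2$ preserves stochastic order. I would then pass the ordering to the limit. Since the first-passage functional $\omega\mapsto\inf\{t\geq0:|\omega(t)|\geq k\}$ is a.s. continuous under the law of $\{\mu t+B(t)\}$ and a.s. equals $T_\mu$ there (see below), the continuous-mapping theorem gives $\delta^2T^{|m|}(p_m)\convd T_\mu$ and $\delta^2T^{|m|}(p'_m)\convd T_{\mu'}$. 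Stochastic order is closed under weak convergence: if $U_m\geq_{st}V_m$ with $U_m\convd U$ and $V_m\convd V$, then testing $\IE f(U_m)\geq\IE f(V_m)$ against bounded continuous nondecreasing $f$ and letting $m\toinf$ gives $U\geq_{st}V$. Hence the discrete inequalities collapse to $T_\mu\geq_{st}T_{\mu'}$, which is exactly Corollary \ref{c1}.

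The main obstacle is the continuity claim just used, and it is the only genuinely analytic point. The first-passage map $\omega\mapsto\inf\{t\geq0:|\omega(t)|\geq k\}$ is discontinuous precisely at paths that touch level $k$ or $-k$ without immediately passing strictly beyond it: a path that grazes the boundary and returns has a much later exit time than a nearby path that crosses. To apply the continuous-mapping theorem I must therefore check that the limit process charges no such path, i.e. that each of $\pm k$ is a regular boundary point for Brownian motion with drift, so that upon reaching an endpoint the process exits on arbitrarily short time scales almost surely. This follows from Blumenthal's zero-one law together with the fact that Brownian motion started at a point immediately takes both larger and smaller values, the drift being irrelevant to this local behaviour; given it, the set of discontinuities of the functional is null under the law of $\{\mu t+B(t)\}$ and the limit transfer is justified. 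Everything else---the Donsker moment matching, the monotonicity supplied by Theorem \ref{thm1}, and the closedness of $\geq_{st}$ under weak limits---is routine once this regularity is in hand.
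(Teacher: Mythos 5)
Your proposal is correct and takes essentially the same route as the paper, which simply asserts that the corollary follows ``using weak convergence of the paths of asymmetric simple random walk to Brownian motion with drift'' and gives no further detail. You have filled in exactly the steps that one-line proof leaves implicit---the diffusive scaling $p_m=\tfrac12(1+\mu k/m)$, the closedness of $\geq_{\mathrm{st}}$ under weak limits, and the a.s.\ continuity of the exit-time functional at paths of the limiting drifted Brownian motion---and each of these is handled correctly.
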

This result shows that the exit time from a symmetric interval for Brownian motion with 
drift is stochastically increased as the drift is moved towards zero.  In particular, it is stochastically maximized when there is no drift.  As far as we can tell, this result is new and does not seem easily obtainable from the explicit form (\ref{eq4}) of the density.

%\section{Multiple players}
%Consider the situation with $n$ players, where each player starts with $k$ dollars.  Whenever player $i$ and player $j$ play a game, the chance $i$ wins and receives a dollar from $j$ equals $p_{i,j}=1-p_{j,i}$.  Suppose in each round two random players who have money are chosen uniformly at random to play a game, and this is repeated until there is only one player remaining with all the money.  Let $T$ be the number of rounds needed.  
%
%\cite{Grig(2016)} derived $E[T] = n(n-1)k^2/2$ in the case where $p_{i,j}=1/2=1-p_{j,i}$.  Next we show that the same argument gives the same expected value in more general settings where $\frac{1}{n-1}\sum_{j\neq i} p_{i,j} =1/2$ for all $i.$
%\begin{proposition} Let $p_i=\frac{1}{n-1}\sum_{j\neq i} p_{i,j}$.  If $p_i= 1/2$ for all $i$,  then $E[T] = n(n-1)k^2/2$ and, in particular, doesn't depend on the values of $p_{i,j}$.
%\end{proposition}
%\begin{proof}
%Using a similar approach to \cite{Grig(2016)},  let $X_i(t)$ be the money that player $i$ has after round $t$.  We see that
%$$M_t = \sum_{i=1}^n X_i(t)^2-2t$$ is a martingale up to time $T$ because, assuming there are $N$ players numbered $1,2,\ldots N$ at the beginning of round $t$ and that $F_t$ is the sigma field generated by the process up to time $t$, we have
% \bes{
% E[M_{t+1}|F_t] & = \sum_{i=1}^N \frac{2p_i(X_i(t)+1)^2}{N} +\frac{2(1-p_i)(X_i(T)-1)^2}{N} +X_i(t)^2\frac{N-2}{N}-2t-2 = M_t.}
%Using $M_0=nk^2$ and $M_T=n^2k^2 -2T$ and $E[M_T]=E[M_0]$ we get the result.
% 
%\end{proof}

\section{ACKNOWLEDGEMENTS}
%We would like to thank Steve Evans for the argument of Corollary \ref{c1} and for pointing us to the relevant formulas in \cite{Bor2002}. 
We greatly appreciate the feedback from the referees, which improved the presentation.

\end{document}